\documentclass[]{article}
\ifdefined\pdfminorversion\pdfminorversion=7\relax\fi
\usepackage{mib}
\makeatletter
\let\ps@fancy\ps@plain
\providecommand{\@toptitlebar}{}
\providecommand{\@bottomtitlebar}{}
\makeatother
\usepackage{mathtools}
\usepackage[T2A]{fontenc}
\usepackage{color}
\usepackage[utf8]{inputenc} 
\usepackage[T1]{fontenc}    
\usepackage{hyperref}       
\usepackage{url}            
\usepackage{booktabs}       
\usepackage{amsfonts}       
\usepackage{nicefrac}       
\usepackage[nopatch=footnote]{microtype} 
\usepackage{lipsum}
\usepackage{graphicx}
\usepackage{color}
\usepackage{amsthm,amsmath,amssymb}
\usepackage{multirow}
\usepackage{listings}
\usepackage{mathrsfs}
\usepackage{abstract}
\usepackage{aliascnt}
\usepackage{cleveref}
\usepackage[backend=biber, style=numeric, url=true]{biblatex}
\usepackage{subfigure}
\usepackage{thm-restate}
\usepackage{setspace}

\newtheoremstyle{dot-after-num} 
  {}          
  {}          
  {\normalfont} 
  {}          
  {\bfseries} 
  {}          
  { }         
  {\thmname{#1}\thmnumber{ #2}.\thmnote{ (#3)}} 

\theoremstyle{dot-after-num}

\newcommand{\RegisterThm}[3]{%
  \newtheorem{#1}[#2]{#3}%
  \expandafter\providecommand\csname #1autorefname\endcsname{#3}%
  \crefname{#1}{#3}{#3s}%
  \Crefname{#1}{#3}{#3s}%
}

\RegisterThm{lemma}{theorem}{Lemma}
\RegisterThm{proposition}{theorem}{Proposition}
\RegisterThm{corollary}{theorem}{Corollary}
\RegisterThm{conjecture}{theorem}{Conjecture}
\RegisterThm{definition}{theorem}{Definition}
\RegisterThm{example}{theorem}{Example}
\RegisterThm{fact}{theorem}{Fact}
\RegisterThm{remark}{theorem}{Remark}

\numberwithin{equation}{section}

\newcommand{\moy}{{\Delta}_{(P,O)}}
\newcommand{\crow}{{\Delta}_{(P,o)}}

\title{On Correspondences between the Alexander Polynomials of Special Alternating Links and MOY Graphs}

\author{
 Leonardo Rodrigues de Medeiros \And Arker Oke Soe
}

\begin{document} 
\maketitle

\begin{abstract}

    Fox's conjecture famously asserts that the absolute values of the
    coefficients of the Alexander polynomial of alternating links are
    trapezoidal. In the setting of MOY graphs, where a different notion of
    Alexander polynomial appears, the equivalent result to Fox's conjecture is
    known to hold. In this paper, we relate the two polynomials in the case of
    special alternating links. More precisely, we show that their degrees,
    first, and last coefficient agree, and that the MOY polynomial coefficients
    always dominate the classical Alexander polynomial.
    
\end{abstract}

\section{Introduction}

A link is a smooth embedding of a finite disjoint union of circles in $S^3$. A
knot is a link with one component. A link projection is a planar
representation of a link with over-crossings and under-crossings identified. A
link projection is called \textit{alternating} if, while traversing along each
component, the crossings are encountered alternately as over-crossings and
under-crossings. Alternating links form one of the central families of knot
theory,  both because their diagrams are combinatorially rigid and because many
of their classical invariants admit especially concrete descriptions. 

One of the oldest such invariants is the Alexander polynomial
\cite{Alexander1928-oc}. For alternating links, Crowell gave a combinatorial
model for the Alexander polynomial with arborescences \cite{crowell_genus_1959}.
The coefficient sequence of the Alexander polynomial of an alternating link is
the subject of Fox's trapezoidal conjecture, which asserts that the
coefficients, disregarding the sign, are trapezoidal \cite{Fox1962}: they form a sequence $a_1,
\dotsc, a_n$ such that $a_i = a_{n+1-i}$, and there exists an integer $k$ with
$1 \le k \le \lceil n/2 \rceil$ such that 
\[
a_1 < \dotsb < a_k = \dotsb = a_{n+1-k} > a_{n-k} > \dotsb > a_n.
\]
 Although this conjecture remains open for the general
alternating case, it has been settled for many special cases.  The case of
two-bridge knots was settled by Hartley \cite{Hartley1979-vj}; the case for
alternating algebraic links by Murasugi \cite{Murasugi1985-ah}; for genus two
alternating knots (among others) by Ozsv\'ath and Szab\'o \cite{Ozsvath2003-wy}
(another proof for genus two alternating knots was given by Jong
\cite{Jong2009-fj}). Hafner, M\'esz\'aros, and Vidinas proved a stronger
log-concavity statement for special alternating links
\cite{hafner_log-concavity_2024}, and another proof of log-concavity for the
same case was given by K\'alm\'an, M\'esz\'aros, and Postnikov
\cite{Kalman2025-hc}. Moreover, for certain Murasugi sums of special alternating
links Azarpendar, Juhász, and Kálmán
\cite{azarpendar2024foxstrapezoidalconjecture} showed trapezoidality, while the
same result was proven via dimers by Mészáros, Sherman-Bennett, Vidinas
\cite{mészáros2025dimerviewfoxstrapezoidal}

A second Alexander-type polynomial arises from MOY graphs, which were first
introduced by Murakami, Ohtsuki, and Yamada to provide graphical ways of
computing $SU(n)$ quantum invariants for links \cite{MOY1998}.  Later, Bao and
Wu introduced an Alexander polynomial for MOY graphs \cite{Bao2020-ph}, and more
recently, Liao and Wu developed a spanning-tree model for the Alexander
polynomial of MOY graphs and proved the corresponding trapezoidality statement
for its coefficient sequence \cite{liao_clock_2024}.   

The goal of this paper is to compare the Alexander polynomial of a special
alternating link with the Alexander polynomial arising from the associated MOY
graph. For an alternating link projection $P$ (and a fixed choice of root $r$),
we denote by $\crow(t)$ the `positive' Alexander polynomial of $P$ coming from
Crowell's model (that is, $\crow(-t)$ is the Alexander polynomial), and by
$\moy(t)$ the associated MOY graph Alexander polynomial of $P$. 

Let
\begin{equation} \label{eq:crowell_poly}
  \crow(t) = \sum_{i=m_2}^{d_2} b_i t^i,
\end{equation}
and
\begin{equation} \label{eq:moy_poly}
  \moy(t) = \sum_{i=m_1}^{d_1} a_i t^i.
\end{equation}
where $a_{m_1}, a_{d_1}, b_{m_2}, b_{d_2}$ are all non-zero. The following theorem
is the main result of this paper.

\begin{restatable}{theorem}{maintheorem}
\label{thm:main_thm}
If $P$ is an alternating projection of a special alternating link and $\crow(t), \moy(t)$ are as above, then
    \begin{enumerate}
        \item $m_1 = m_2$ and $d_1 = d_2$,
        \item $b_i \le a_i$ for every $m_1 \le i \le d_1$,
        \item $a_{d_1} = a_{m_1} = b_{m_2} = b_{d_2}$.
    \end{enumerate}
\end{restatable}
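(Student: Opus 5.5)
The plan is to compare the two state-sum models coefficientwise. By Crowell's model, $b_i$ is the number of spanning arborescences, rooted at $r$, of the directed graph attached to $P$ whose weight is $t^i$. By the Liao--Wu spanning-tree (clock) model, $a_i$ is a sum over MOY states of $P$, meaning the spanning-tree or Kauffman-type states of the MOY graph, taken with their $t$-gradings. For a special alternating projection, every crossing has the same type relative to the checkerboard colouring, so all crossings contribute with the same sign convention. I therefore first check that every MOY state of degree $i$ contributes a strictly positive term to $a_i$, so that $a_i$ is a genuine count with no cancellation, just as $b_i$ is.

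With both sides nonnegative counts, part (2) follows from an \emph{injection from Crowell objects into MOY objects}. I would construct a map $\Phi$ sending each $r$-rooted arborescence $A$ with weight $t^i$, an object counted by $b_i$, to a MOY state $\Phi(A)$ of the same degree $i$, an object counted by $a_i$. The construction reads off, at each crossing, the local marker determined by whether the corresponding edge lies in $A$ and in which direction, and turns this into the local MOY configuration (the thick-edge resolution or the choice of spanning-tree edge) at that vertex. Three things need checking:
\begin{enumerate}
\item $\Phi(A)$ satisfies the global spanning-tree or state condition of Liao--Wu. This should follow because the arborescence condition (a unique directed path to $r$) matches the acyclicity and connectivity requirement in the MOY graph, with $r$ playing the role of the marked or base edge.
\item The degree is preserved. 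Both gradings are sums of local contributions at the crossings, so I would compare these contributions crossing by crossing.
\item $\Phi$ is injective, since $A$ can be recovered from the local data of $\Phi(A)$.
\end{enumerate}
This gives $b_i \le a_i$ for every $i$.

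Parts (1) and (3) come from studying the extreme degrees. Since $\Phi$ preserves degree and is injective, $[m_2,d_2]\subseteq[m_1,d_1]$, and $b_{m_2}\le a_{m_2}$, $b_{d_2}\le a_{d_2}$. For the reverse inclusion I would use the clock theorem, in both models, that states with minimal (respectively maximal) degree are exactly those reachable from the unique clocked (respectively counterclocked) state by degree-preserving moves. I would then show that every MOY state of extremal degree lies in the image of $\Phi$: at an extremal state each local configuration is forced into one of the two ``Crowell-type'' local pictures, so a preimage can be written down directly. This gives $m_1=m_2$, $d_1=d_2$, $a_{m_1}=b_{m_2}$ and $a_{d_1}=b_{d_2}$. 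The remaining equality $b_{m_2}=b_{d_2}$ is the symmetry of the Alexander polynomial, $\Delta(t)\doteq\Delta(t^{-1})$, which together with the above gives the full chain in (3).

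I expect the main obstacle to be the surjectivity of $\Phi$ in the extremal degrees. Injectivity and degree preservation are local, but surjectivity needs a global argument that no ``exotic'' MOY state, one using a local configuration with no Crowell counterpart, can attain the extreme degree. I would first try to show that each exotic local configuration shifts the degree strictly inward by a fixed amount relative to the Crowell-type configurations. If that fails, I would exploit the bipartite structure of the Seifert graph of a special alternating diagram to pair exotic states with non-extremal ones.
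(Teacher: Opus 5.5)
Your high-level architecture matches the paper's: a degree-preserving injection from Crowell objects into MOY objects, plus surjectivity in the extremal degrees. However, you never actually construct the map, and the mechanism you sketch does not address the real difficulty.

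In this paper $\moy(t)$ is by definition $\sum_{A\in\mathcal{T}(O,r)}\mu(A)$. It counts arborescences of the \emph{same} planar graph $P$, rooted at the same $r$, as Crowell's model. Only the orientation and weighting change: $O$ and $\mu$ in place of $o$ and $\kappa$. So there are no signed Kauffman/MOY states, your positivity check is unnecessary, and the task is to map $\mathcal{T}(o,r)$ into $\mathcal{T}(O,r)$.

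Under $o$ every Seifert circle bounds a clockwise face, whereas under $O$ the circles in $H$ are traversed the other way. The two orientations therefore disagree on every edge of $H$. Hence an $o$-arborescence is in general not an $O$-arborescence. Being an arborescence is a global condition, so nothing in a crossing-by-crossing rule guarantees that $\Phi(A)$ is valid. Your step 1 is a heuristic standing in for the heart of the proof, and step 3 (injectivity by local recovery) is asserted without reason. Likewise, the reverse inclusion in part 1 and all of part 3 rest on the extremal surjectivity you leave open. The clock-theorem statement you invoke is neither established nor connected to the image of $\Phi$.

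The paper's proof rests on the following ingredients.
\begin{enumerate}
\item A special alternating diagram has all crossings of the same sign, and this gives $\mu=\kappa$ on every edge of $P$. Uniform checkerboard type holds for every alternating diagram, so it is not the relevant property. Equality of the weightings is what makes degree comparison possible.
\item Each Seifert circle lies entirely in $H$ or in $K$. So the weight of any spanning tree is a function of its edge counts $c_i$ on the Seifert circles.
\item An exchange argument replaces an offending edge $e_1$ by an edge $e_2$ entering the subtree cut off by $e_1$, keeping every $c_i$ fixed. It turns any spanning tree into an arborescence of $(P,O)$, and likewise of $(P,o)$, with the same counts. This yields weight-preserving maps $m$ and $m^*$ in both directions, so part 1 follows with no surjectivity argument.
\item Injectivity of $m$ is the nontrivial lemma of K\'alm\'an and Hafner--M\'esz\'aros--Vidinas: an arborescence of the alternating dimap $(P,o)$ is determined by its counts $c_i$.
\end{enumerate}

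For part 3 no clock theorem is used. The minimal-weight arborescences are exactly those omitting one edge of each circle of $H$. The paper maps $\mathcal{T}_m(O,r)$ injectively into $\mathcal{T}_m(o,r)$ by swapping, on each circle of $H$, the tree edge leaving the entry vertex for the circle edge entering it. Together with $b_i\le a_i$ this forces the lowest coefficients to be equal. Palindromicity of both polynomials then gives the highest ones.
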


 The proof is purely combinatorial. Both polynomials are expressed as sums over rooted arborescences of the planar graph underlying $P$, but with different orientations and, a priori, different weightings. The special alternating hypothesis has two key consequences. First, the Crowell and MOY weightings agree on every edge of $P$. Second, each Seifert circle lies entirely in one of the two distinguished subgraphs determined by the edge weights. These facts allow us to construct a weight-preserving injective map
\[
m:T(o,r)\longrightarrow T(O,r)
\]
from arborescences in the alternating orientation to arborescences in the link orientation. This injection gives the coefficientwise domination in \Cref{thm:main_thm}. A more refined analysis of extremal-weight arborescences shows that the map is bijective on the arborescences contributing to the first and last nonzero coefficients, giving the equality of the extremal coefficients.

The paper is structured as follows. In \Cref{sec:prelim} we recall important
definitions regarding orientation, the MOY and Crowell weightings, and Seifert
circles in the graph underlying $P$. In \Cref{sec:weight} we show that the MOY
and Crowell weightings agree on the special alternating case. In \Cref{sec:map},
we construct maps $m$ and $m^*$ between arborescences of $P$ with respect to the
link orientation and arborescences of $P$ with respect to the alternating
orientation. In \Cref{sec:polyn} we prove properties of the maps $m$ and $m^*$
which we use to give a proof of \Cref{thm:main_thm}.

\section{Preliminaries}
\label{sec:prelim}

\subsection{Orientations and weightings on an alternating link projection}

Throughout this section, we follow the notation of Crowell's landmark paper
\cite{crowell_genus_1959}.

Consider a non-trivial alternating link $L$ with an alternating projection $ P
$. $ P $ can be seen as a planar graph with crossings corresponding to vertices
and segments between crossings corresponding to edges. As in Crowell
\cite{crowell_genus_1959}, this graph inherits two natural orientations (see
\Cref{fig:orientations_on_trefoil}):
\begin{enumerate}

  \item A link orientation $ O $ coming from the orientation of the original link.

  \item An alternating orientation $ o $, where edges are oriented from
  overcrossings to undercrossings.

\end{enumerate}

\begin{figure}[!ht]
    \centering
    \includegraphics[width=0.5\linewidth]{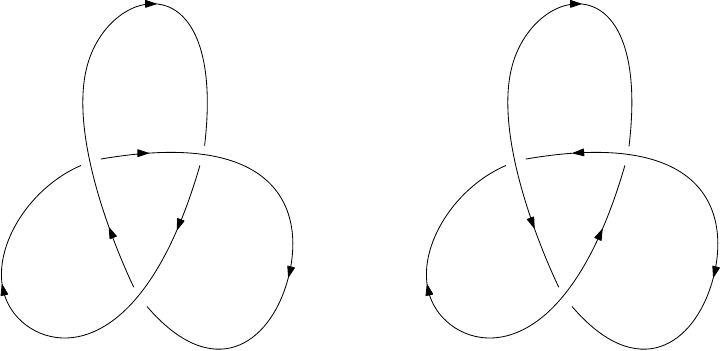}
    \caption{Link orientation (left) and alternating orientation (right) of the
    trefoil}
    \label{fig:orientations_on_trefoil}
\end{figure}

Let $ (P,O) $ and $ (P,o) $ be the two corresponding oriented planar graphs.
Denote the set of edges of $P$ by $ E(P) $. 

\begin{definition}
    A \textbf{weighting} $\alpha$ of a graph $G$ is the data of a function
    \begin{align*}
       \alpha: E(G) \to  \{1,t\}.
    \end{align*}
\end{definition}

In our graph $P$, we can consider two weightings (all mentions of orientations
will refer to $O$):

\begin{enumerate}
    \item (MOY weighting) $\mu$, where at each vertex the first and second
    incoming edges, in counterclockwise order, get weights $1$ and $t$,
    respectively.
    
    \item (Crowell weighting) $\kappa$, where at each vertex we traverse the over-segment and assign weight $1$ to the under crossing on the left and $t$ to the under crossing on the right.
    
\end{enumerate}
See \Cref{fig:weightings} for a comparison of the weightings $\mu$ and $\kappa$
near a crossing.

\begin{figure}[!ht]
    \centering
    \includegraphics[width=0.5\linewidth]{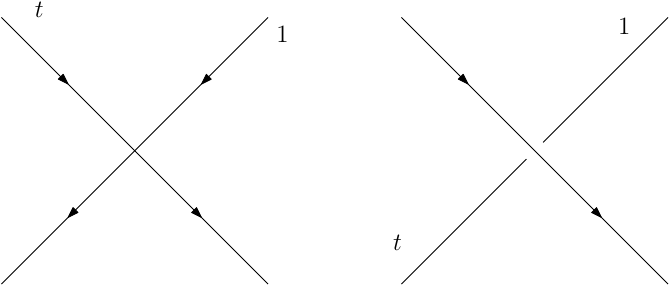}
    \caption{Weightings $\mu$ (left) and $\kappa$ (right) near a crossing}
    \label{fig:weightings}
\end{figure}

\begin{definition}
    Let $(G, \partial)$ be an oriented graph and $r$ a vertex of $G$. A subgraph
    $A$ of $G$ is an \textbf{arborescence on $(G, \partial)$ rooted at a vertex
    $r$} if it contains all vertices of $G$ and for every vertex $v$ there is a
    unique directed path from $r$ to $v$ in $ A $ agreeing with the orientation
    $\partial$.
\end{definition}
When the graph $G$ is clear from context, we write ${\mathcal{T}(\partial, r)}$
for the set of arborescences on $(G, \partial)$ rooted at $r$.

For a subgraph $S$ and a weighting $\alpha$ of $G$, define the
\textbf{weighting of $S$} as
\[
\alpha(S) := \prod_{i \in E(S)} \alpha(i).
\]

Fixing a vertex $r$ of $P$, the orientations $O$ and $o$ define polynomials
\[
\Delta_{(P,O)}(t) := \sum_{A \in \mathcal{T}(O,r)} \mu(A)
\quad \text{and} \quad
\Delta_{{(P,o)}}(t) := \sum_{A \in \mathcal{T}(o,r)} \kappa(A).
\]

Theorem 2.12 of \cite{crowell_genus_1959} establishes that $\Delta_{(P,O)}(-t)$
is (up to a factor of $t^{\pm k}$) the Alexander polynomial of $P$. On the other
hand, $\Delta_{(P,O)}(t)$ is the Alexander Polynomial associated to the MOY
graph $(P,O)$ with all edges colored 1 as in \cite{liao_clock_2024}.

The idea for the proof of \Cref{thm:main_thm} will be to construct a
weight-preserving injective map
\[
m : \mathcal{T}(o, r) \to \mathcal{T}(O, r)
\]
such that $\kappa(A) = \mu(m(A))$ and every arborescence of minimal/maximal
weight in $\mathcal{T}(O,v)$ is in the image of $m$.

\subsection{Seifert circles and special Seifert circles}

Let $\mathcal{C}(G, \partial)$ be the set of oriented cycles of the oriented
graph $(G,\partial)$.

\begin{definition}[Crowell (4.7) \cite{crowell_genus_1959}]
   Let $P$ be an alternating projection of an alternating link. The
   \textbf{Seifert circles of $P$} are oriented cycles lying on the intersection
   \[
   \mathscr{F}(P) := \mathcal{C}(P,O) \cap \mathcal{C}(P,o).
   \]
\end{definition}

As discussed by Crowell \cite{crowell_genus_1959}, each edge of $P$ lies in
exactly one Seifert circle.

\begin{definition}
A \textbf{special Seifert circle} is a Seifert circle where either the inside or
the outside region delimited by the cycle contains no vertices or edges of $P$.
The set of special Seifert circles of $P$ is denoted $\mathscr{F}_s(P)$.
\end{definition}

Let $H, K$ be the two subgraphs of $P$ both containing all of its vertices, and
edges $\kappa^{-1}(1)$ and $\kappa^{-1}(t)$, respectively. An alternative
characterization of special Seifert circles is given in (4.7)
\cite{crowell_genus_1959} as
\[
\mathscr{F}_s(P) = \mathcal{C}(H,o) \cup \mathcal{C}(K,o).
\]

\begin{definition}
    An alternating link projection $P$ is defined to be \textbf{special
    alternating} if
    \[
    \mathscr{F}(P) = \mathscr{F}_s(P).
    \]
\end{definition}
For a special alternating link projection, $H$ and $K$ will contain, with
respect to the link orientation $O$, all the counterclockwise and clockwise
circles, respectively.

\section{The weightings \texorpdfstring{$\mu$}{μ} and \texorpdfstring{$\kappa$}{κ} in the special alternating case} 
\label{sec:weight}
Before constructing our maps $m$ and $m^*$, we show that for special alternating
links the weightings $\mu$ and $\kappa$ agree. 

Recall that a \textbf{positive alternating link projection} is an alternating
link projection where all crossings are positive, meaning that when traversing
the over-segment via the orientation $O$, the under-segment is oriented 
from right to left.

\begin{lemma}\label{lem:weightings_agree}
    If $P$ is a positive alternating link projection, the weightings $\mu$ and
    $\kappa$ agree on every edge of $P$.
\end{lemma}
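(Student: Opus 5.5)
The lemma is local, so I would check it one crossing at a time. Each edge of $P$ is incoming at exactly one vertex with respect to $O$: the crossing at its head. So both $\mu$ and $\kappa$ are really assignments made at each vertex to the two edges entering it. For $\mu$ this is explicit in the definition. For $\kappa$, I will first argue that the two edges receiving weights at a crossing are exactly the two $O$-incoming edges there. At a crossing $v$, the four half-edges are the incoming and outgoing over-strand and the incoming and outgoing under-strand. Because $P$ is alternating, an edge ending at $v$ on the over-strand started at an under-crossing, and vice versa. I expect the right reading of Crowell's convention to be that the edges weighted at $v$ are the two edges whose heads (in $O$) are at $v$. I would confirm this against \Cref{fig:weightings}: the "under crossing on the left/right" of the traversed over-segment are the two segments arriving at $v$. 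With that established, it suffices to compare the labels $1$ and $t$ on the two incoming edges $e_{\mathrm{over}}$ (incoming over-strand) and $e_{\mathrm{under}}$ (incoming under-strand) at a single positive crossing.

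Next I fix a local model of a positive crossing. Put $v$ at the origin, with the over-strand traveling in the positive $x$-direction. Positivity says the under-strand goes from right to left relative to the over-strand, that is, from $-y$ to $+y$ (traveling along $+x$, the right side is $-y$). So the incoming over-edge arrives from direction angle $\pi$ (the west), and the incoming under-edge arrives from angle $3\pi/2$ (the south). The outgoing edges leave toward the east and the north. Going counterclockwise around $v$, starting just after the outgoing north edge, we meet west (incoming over) and then south (incoming under). So the counterclockwise order of the incoming edges is $e_{\mathrm{over}}$ first and $e_{\mathrm{under}}$ second. Under $\mu$, $e_{\mathrm{over}}$ gets $1$ and $e_{\mathrm{under}}$ gets $t$. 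The one subtlety is that "first and second in counterclockwise order" depends on the starting point. The natural convention is to start from the gap between the outgoing and incoming pairs, since incoming edges are adjacent at a crossing. I would make this precise by noting that the two incoming edges at a crossing are always cyclically adjacent, so the order is well defined.

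Then I compute $\kappa$ in the same model. Traversing the over-segment along $+x$, the left side is $+y$ and the right side is $-y$. I need to identify which incoming segment Crowell's rule calls "on the left" and which "on the right". In his convention (Crowell's Figure, \Cref{fig:weightings}) the labels are attached to the under-segments at the crossing relative to the over-strand direction. So the issue is matching "left" to the edge that ends up being $e_{\mathrm{over}}$ as the incoming edge in $O$. I expect this identification to be the main obstacle: Crowell's rule is phrased in terms of the alternating orientation and under-segments, while $\mu$ uses $O$. I would resolve it by carefully translating the figure into the coordinate model and checking that, at a positive crossing, the edge labelled $1$ by $\kappa$ is the incoming over-edge and the edge labelled $t$ is the incoming under-edge. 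At a negative crossing the under-strand is reflected, which swaps the counterclockwise order but not Crowell's left/right labels. So the weightings would disagree there, which explains why positivity is required.

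Finally, since every edge is incoming at exactly one crossing and all crossings are positive, the local agreement at each crossing gives $\mu(e)=\kappa(e)$ for every edge $e$ of $P$.
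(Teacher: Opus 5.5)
Your treatment of $\mu$ is fine and matches the paper: at a positive crossing the incoming over-edge comes first in counterclockwise order and gets $1$, and the incoming under-edge gets $t$. The gap is in the $\kappa$ half, at the step you flagged as the main obstacle, and the resolution you expect there is false.

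Crowell's rule at a crossing $v$ labels the two \emph{under}-segments at $v$, i.e.\ the two halves of the under-strand. Equivalently, these are the two edges whose head in the alternating orientation $o$ is $v$. It does not label the two $O$-incoming edges. The incoming over-edge at $v$ is an over-segment there, so it receives no label at $v$ at all. Your own coordinate model shows this. Traveling along $+x$, the under-segment on the left is the northern one, which is the \emph{outgoing} under-edge, and it gets $1$. The southern one, the incoming under-edge, gets $t$. So the check ``the edge labelled $1$ by $\kappa$ at $v$ is the incoming over-edge'' cannot succeed. Only the incoming under-edge can be compared with $\mu$ by inspecting $v$ alone.

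The missing idea, which the paper's proof uses, is that the comparison is not local to one crossing. Every edge is an under-segment at exactly one of its endpoints (its $o$-head), and that is where $\kappa$ weights it. Take the incoming over-edge $e$ at $v$. The alternating property you noted but did not use says $e$ passes under at its $O$-tail $w$, where it is the outgoing half of the under-strand. Positivity \emph{at $w$} puts it on the left of $w$'s over-strand, so $\kappa(e)=1=\mu(e)$. The sign condition is therefore needed at two different crossings, which is why a global sign hypothesis is required.

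This also shows your remark about negative crossings is wrong. If all crossings are negative, the same analysis again gives agreement. The incoming under-edge lies on the left and is first in counterclockwise order, so both weights are $1$. The incoming over-edge is on the right at its tail and second at its head, so both weights are $t$. This is exactly what the paper needs for the special alternating corollary. Disagreement can occur only on edges joining crossings of opposite sign.
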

\begin{proof}
Over the course this proof we always refer to the link orientation $O$. 

As all crossings of $P$ are positive, we proceed by analyzing the weights of the
edges of a single positive crossing. Knowing that every edge is incoming to some
crossing, it is sufficient to analyze the incoming edges.

For the weighting $\mu$, we know that the first incoming edge has weight
$1$ and the second incoming edge has weight $t$. By the positivity of the link,
this means that the incoming over-edge has weight $1$ and the incoming
under-edge has weight $t$.

\begin{figure}[!ht]
    \centering
    \includegraphics[width=0.4\linewidth]{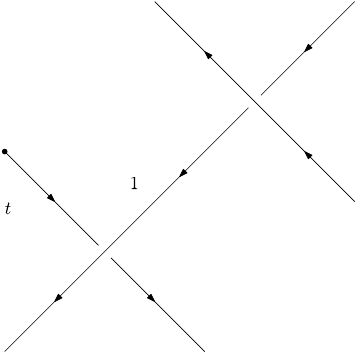}
    \caption{Weightings $\mu$ and $\kappa$ match}
    \label{fig:weightings_match}
\end{figure}

For the weighting $\kappa$, we can immediately see that the incoming under-edge
will have weight $t$ by the positivity of the link (it will be to the right of
the over-segment). To decide on the weight of the incoming over-edge, we need to
look at the previous crossing this edge has intersected. Using positivity again,
we conclude that the incoming over-edge has weight $1$ (as it will be to the
left of the over-segment in this previous crossing).

With the orientations agreeing on the incoming edges of every crossing, we are
done.
\end{proof}

The same case analysis shows that the weightings agree for any negative
alternating link projection. As every special alternating link projection is
either positive or negative (see \cite{murasugiJonesPolynomialsClassical1987},
Corollary 7) we get:
\begin{corollary}
    If $P$ is a special alternating link projection, the weightings $\mu$ and
    $\kappa$ agree on every edge of $P$.
\end{corollary}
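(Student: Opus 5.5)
The corollary should follow from \Cref{lem:weightings_agree} together with its mirror-image analogue and the cited structural fact about special alternating projections. The argument has three steps.

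\textbf{Step 1: reduce to positive or negative projections.} By \cite{murasugiJonesPolynomialsClassical1987}, Corollary 7, every special alternating projection $P$ has all crossings of the same sign. So $P$ is either a positive alternating projection or a negative one. If $P$ is positive, \Cref{lem:weightings_agree} applies directly and we are done.

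\textbf{Step 2: the negative case.} Here I would repeat the local analysis in the proof of \Cref{lem:weightings_agree} with left and right swapped.

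Every edge is incoming to exactly one crossing under $O$. So it suffices to compare $\mu$ and $\kappa$ on the two incoming edges at each crossing.

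At a negative crossing, traversing the over-segment along $O$, the under-segment runs from left to right. The incoming under-edge therefore lies on the left of the over-segment, and $\kappa$ gives it weight $1$.

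For $\mu$, the counterclockwise order of the incoming edges at a negative crossing is reversed relative to the positive case. I would check this against the picture in \Cref{fig:weightings}. The outcome is that the incoming under-edge is first, with weight $1$, and the incoming over-edge is second, with weight $t$.

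The incoming over-edge left the previous crossing as an under-strand. That previous crossing is also negative, so this edge lies on the right of its over-segment there, and $\kappa$ gives it weight $t$. Hence $\mu$ and $\kappa$ agree on both incoming edges at every crossing, and therefore on every edge.

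\textbf{Step 3: the main obstacle.} The only delicate point is the orientation bookkeeping in Step 2. I need to confirm that the counterclockwise ordering convention for $\mu$ really flips between positive and negative crossings. I also need to confirm that the ``left/right of the over-segment'' rule for $\kappa$ is applied at the crossing where the edge is an \emph{under} strand.

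I would settle both by drawing a single negative crossing with its neighbouring crossing. Alternatively, I could note that mirroring the plane reverses counterclockwise order and swaps left and right simultaneously. Under such a reflection both weightings transform in the same way, so agreement for positive projections transfers to negative ones.
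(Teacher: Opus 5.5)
Your proposal is correct and follows the paper's argument. You reduce via Murasugi's Corollary 7 to the all-positive or all-negative case, apply \Cref{lem:weightings_agree} directly in the positive case, and redo the same local analysis with left and right swapped in the negative case. The paper only asserts that last step, and your Step 2 computation (incoming under-edge weighted $1$, incoming over-edge weighted $t$ under both $\mu$ and $\kappa$) checks out; your reflection remark, that mirroring the plane swaps $1$ and $t$ simultaneously in both $\mu$ and $\kappa$, is also a valid and cleaner way to transfer the positive case to the negative one.
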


\section{The maps \texorpdfstring{$m^*$}{m*} and \texorpdfstring{$m$}{m}}
\label{sec:map}

We now construct the maps $m : \mathcal{T}(o, r) \to \mathcal{T}(O, r)$ and $m^*
: \mathcal{T} (O,r) \to \mathcal{T}(o,r)$ with the properties that $\mu(A) =
\kappa ( m^*(A)), \kappa(A) = \mu(m(A)),$ and $m^* \circ m = \text{id}_{\mathcal{T}(o,r)}$.

\subsection{Construction of \texorpdfstring{$m^*$}{m*}}

The map $m^*$ is defined using the following two lemmas. Recall that
an \textit{alternating dimap} is a planar Eulerian digraph such that the edges
around each vertex are oriented in an alternating clockwise incoming-outgoing
order. The graph $(P, o)$ is an alternating dimap whose clockwise-oriented
regions are precisely the Seifert circles of $P$.

For an alternating dimap $D$, denote the cycles surrounding the
clockwise-oriented regions by $C_1, \ldots, C_k$. For a subset $S$ of the edges
of $D$, let $c_i(S)$ be the number of edges of $S$ in the cycle $C_i$.

\begin{lemma}[{\cite[Proof of Theorem 10.1]{KALMAN2013823}, \cite[Lemma 4.3]{hafner_log-concavity_2024}}]\label{lem:meszaros_lemma}
     Let $D$ be an alternating dimap, $T$ be any spanning tree in $D$, and fix
     any $r \in V(D)$. Then, there exists an arborescence $A$, rooted at $r$,
     such that $c_i(A)=c_i(T)$ for all $i \in [k]$.
\end{lemma}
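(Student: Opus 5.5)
The plan is to start from the spanning tree $T$ and change it, one edge at a time, into an arborescence rooted at $r$. Each change is a single edge exchange inside one clockwise cycle $C_i$, so the counts $c_i$ never move. Write $T_0 := T$. At each stage the current tree $T_j$ will satisfy $c_i(T_j)=c_i(T)$ for all $i$. To measure progress, I will use the number of vertices $v \neq r$ whose edge toward the root, along the unique $r$–$v$ path in $T_j$, is not directed into $v$. Call such vertices \emph{bad}. The tree $T_j$ is an arborescence rooted at $r$ exactly when it has no bad vertex.

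The basic exchange goes as follows. Take a bad vertex $v$ that is closest to $r$ in $T_j$, and let $e$ be its parent edge. Then $e$ is oriented out of $v$. By the definition of an alternating dimap, every edge of $D$ lies on exactly one clockwise face cycle $C_i$, which is the one used to define $c_i$. Let $C_i$ be the cycle containing $e$. As we walk along $C_i$, the edges are consistently oriented. So the edge $e'$ of $C_i$ that precedes $e$ is directed into $v$, since $e$ leaves $v$.

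If $e' \notin T_j$, we form $T_j - e + e'$. This is a tree whenever $e'$ reconnects the component of $v$. Removing $e$ splits off the subtree below $v$. The other endpoint $u$ of $e'$ may, however, lie in that subtree, and in that case we keep moving backwards along $C_i$. Since $C_i$ is a directed cycle passing through $e$, the cycle $C_i - e$ is a directed path from the head of $e$ back to $v$. This path must leave the subtree below $v$ somewhere. Let $f$ be its first edge, reading backwards from $v$, that joins the component containing $r$ to the subtree below $v$. The exchange $T_j - e + f$ is then a spanning tree, and it keeps $c_i$ fixed because both $e$ and $f$ lie on $C_i$. I need to check that $f$ can be chosen to enter the subtree, that is, oriented toward the $v$-side. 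Walking the directed path backwards from $v$, the first crossing edge is traversed from the $r$-side into the subtree, so it is indeed oriented into the subtree. Its head $w$ becomes a new good vertex, possibly with a different parent.

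The main obstacle is making the potential strictly decrease. After the exchange, vertices inside the subtree get re-rooted through $w$, and edges that were previously good may now point the wrong way relative to the new root path. I would first try a lexicographic potential instead of a plain count: process vertices in order of distance from $r$, and show that the set of good vertices forming a rooted sub-arborescence at $r$ strictly grows. Indeed, $f$ attaches $w$ with a correctly directed edge to the good region, and nothing in the good region changes. So the number of vertices reachable from $r$ by directed paths in $T_j$ increases by at least one. Iterating at most $|V(D)|$ times yields an arborescence $A$ rooted at $r$ with $c_i(A)=c_i(T)$ for all $i\in[k]$.
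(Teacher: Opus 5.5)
\paragraph{A gap in the termination argument.} Your exchange step is correct. It is the same move as in the construction the paper cites and adapts in \Cref{lemma:seifert_version}: the parent $p$ of your $v$ plays the role of $v_1$, your $e=(v\to p)$ is the extra incoming edge $e_1$, the subtree below $v$ is $V_1$, and $f$ is an edge $e_2$ of the same clockwise cycle entering $V_1$. The gap is in termination. You know only that the tail $y$ of $f$ lies in the component of $T_j-e$ containing $r$. You do not know that $y$ is reachable from $r$ by a directed path. That component also contains the subtrees hanging below other bad vertices, and $C_i-e$ can pass through them just before it enters the subtree below $v$. So the assertion that $f$ attaches $w$ to the good region is unjustified, and in fact false.

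\paragraph{A counterexample.} Take the alternating orientation of the standard trefoil diagram. It has vertices $a,b,c$, one edge in each direction between each pair, and clockwise faces $C_1\colon a\to b\to c\to a$ and $C_2\colon a\to c\to b\to a$. Let $r=b$ and $T=\{a\to b,\ c\to b\}$. The closest bad vertices are $a$ and $c$, and the reachable set is $\{b\}$.
\begin{itemize}
\item Choosing $v=a$ gives $f=c\to a$.
\item Choosing $v=c$ gives $f=a\to c$.
\end{itemize}
In both cases the tail of $f$ is unreachable, and the new tree still has reachable set $\{b\}$. So the claim that this set grows by at least one at every step fails already at the first step.

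\paragraph{How to repair it.} The procedure does terminate, but you need a second coordinate in the potential. Let $G_j$ be the reachable set of $T_j$, and let $F_j$ be the set of vertices outside $G_j$ whose parent lies in $G_j$. Since $v$ is a closest bad vertex, its parent is in $G_j$, so $v\in F_j$.
\begin{itemize}
\item If $y\in G_j$, then $G_j\cup\{w\}\subseteq G_{j+1}$.
\item If $y\notin G_j$, every vertex on the root side keeps its root path, and the whole subtree below $v$ now hangs below $y\notin G_{j+1}$. Hence $G_{j+1}=G_j$, and $F_{j+1}=F_j\setminus\{v\}$.
\end{itemize}
Therefore $(|G_j|,-|F_j|)$ increases strictly in lexicographic order. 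Both coordinates are bounded by $|V(D)|$, so the process stops, and it can only stop at a tree with no bad vertex. That tree is an arborescence rooted at $r$ with the same counts $c_i$ as $T$.
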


\begin{lemma}[{\cite[Proof of Theorem 10.1]{KALMAN2013823}, \cite[Lemma
4.6]{hafner_log-concavity_2024}}]\label{lem:uniqueness}
    Let $D$ be an alternating dimap, $T$ be any spanning tree
    of $D$, and fix a vertex $r$ of $D$. For any sequence $\{ x_1, \dots, x_n
    \},$ there exists at most one arborescence $A$ rooted at $r$ such that
    $c_i(A) = x_i$ for all $i.$
\end{lemma}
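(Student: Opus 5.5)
The plan is to prove the uniqueness statement of \Cref{lem:uniqueness} by contradiction, using a cycle-exchange argument on the difference of two arborescences. Suppose $A \neq A'$ are two arborescences rooted at $r$ with $c_i(A)=c_i(A')$ for every $i$. Since $D$ is Eulerian and planar, I will pass to the planar dual picture. The clockwise-oriented faces are the cycles $C_1,\dots,C_k$, and each edge of $D$ lies on exactly one of them. So the vector $(c_i(S))_i$ records how many edges $S$ uses from each clockwise face. The idea is to find a ``potential'' on the faces that strictly separates two distinct arborescences.

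First I will set up the standard relation between arborescences rooted at $r$ and spanning trees. In an arborescence every vertex other than $r$ has exactly one incoming edge. In an alternating dimap each vertex of degree $2d$ has $d$ incoming edges, and these are distributed among the clockwise faces incident to that vertex. I will look at the symmetric difference $A \triangle A'$. Choose a vertex $v$ whose incoming edges $e \in A$ and $e' \in A'$ differ. Then follow the standard argument: adding $e'$ to $A$ and deleting $e$ yields a unicyclic structure. Iterating gives directed cycles $Z$ in $A \cup A'$ along which the two arborescences alternate in their choice of incoming edge. Such a $Z$ is a directed cycle in $D$ that avoids $r$, or at least whose ``root side'' is controlled.

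The key step is the winding computation. A directed cycle $Z$ in a planar alternating dimap bounds a disk. The edges of $A$ on $Z$ and the edges of $A'$ on $Z$ leave different clockwise faces along the boundary. The alternating (clockwise in/out) condition at each vertex means the incoming edge chosen by $A$ versus $A'$ at a vertex of $Z$ differs by a rotation that crosses clockwise faces lying inside, respectively outside, $Z$. Summing over $Z$ (the ``clock''/Kálmán--Tutte activity argument), I expect that for the clockwise faces enclosed by $Z$, the weighted sum of $c_i(A)-c_i(A')$ is forced to be strictly positive (or strictly negative). This contradicts equality of all the $c_i$. Concretely, I would define $\phi(S)=\sum_{i:\,C_i\subset \mathrm{int}(Z)} c_i(S)$ and show $\phi(A)\neq\phi(A')$ by counting, at each vertex of $Z$, whether the switched edge enters an interior or an exterior clockwise face.

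The main obstacle is making this local count rigorous: showing that, on a cycle where $A$ and $A'$ disagree, all the swaps push in the same direction relative to the interior. I would first try to reduce to a single minimal cycle $Z$ (innermost in the plane), and then use planarity plus the alternating orientation to get a sign-coherent count. If that fails, the fallback is to invoke Kálmán's bijection between arborescences and hypertree/spanning-tree data directly: Kálmán and Hafner--Mészáros--Vidinas show the map $A\mapsto (c_i(A))_i$ is injective via the trinity/hypertree correspondence. In that case I would simply cite their proof of Theorem~10.1.
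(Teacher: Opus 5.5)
The paper gives no proof of this lemma. It is quoted from K\'alm\'an (proof of Theorem~10.1) and Hafner--M\'esz\'aros--Vidinas (Lemma~4.6), so your fallback of citing them is exactly what the paper does.

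Your own argument has a genuine gap at its central step, and as set up it cannot work. Both the construction of $Z$ and the claimed sign of $\phi(A)-\phi(A')$ are derived from $A\neq A'$ alone; the hypothesis $c_i(A)=c_i(A')$ enters only at the final contradiction. So both steps would have to hold for every pair of distinct arborescences, and they do not.

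Let $D$ have vertices $r,a,b$, each pair joined by two oppositely oriented parallel edges (the graph of the standard trefoil diagram). Take inner edges $r\to b$, $b\to a$, $a\to r$ and outer edges $r\to a$, $a\to b$, $b\to r$. This is an alternating dimap whose clockwise faces are the two triangles and whose counterclockwise faces are the three digons.
\begin{itemize}
\item For $A=\{r\to a,\ r\to b\}$ and $A'=\{r\to a,\ a\to b\}$, swapping the in-edge at $b$ creates no cycle. In fact $A\cup A'$ contains no directed cycle at all, so there is no $Z$.
\item For $A=\{r\to a,\ a\to b\}$ and $A'=\{r\to b,\ b\to a\}$, the only directed cycle in $A\cup A'$ is the digon $a\to b\to a$. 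One side of it contains no clockwise face and the other contains both. Hence $\phi(A)=\phi(A')$ (equal to $0$, resp.\ $2$) whichever side you call the interior.
\end{itemize}
So the expected ``winding computation'' is false, not merely unverified.

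What is missing is that the hypothesis must be used to \emph{produce} the cycle, and the cycle should live among the clockwise faces rather than in $A\cup A'$.
\begin{enumerate}
\item You already noticed that in-edges at $v$ correspond to clockwise corners at $v$: each in-edge $e$ lies on the clockwise face $C(e)$ whose corner at $v$ is immediately counterclockwise of $e$.
\item Each edge of an arborescence is the in-edge of exactly one non-root vertex. For each $v$ at which the in-edges $e_v\in A$ and $e'_v\in A'$ differ, put an arc $C(e'_v)\to C(e_v)$. The resulting digraph on clockwise faces is balanced exactly when $c_i(A)=c_i(A')$ for all $i$.
\item A balanced nonempty digraph contains a simple directed cycle (possibly a loop). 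Draw it as a simple closed curve $\gamma$ running through these faces and the corresponding vertices, meeting $D$ only at those vertices.
\item At each such $v$, $\gamma$ enters through the corner of $e'_v$ and leaves through the corner of $e_v$. This places $e'_v$ on the left of $\gamma$ and $e_v$ on the right, uniformly along $\gamma$. That is the sign-coherence you were looking for.
\item Since $r\notin\gamma$, it lies on one side. If $r$ lies to the right, the set of vertices on or to the left of $\gamma$ is closed under taking $A'$-parents, because edges cannot cross $\gamma$. So $A'$ cannot reach that set from $r$. Symmetrically, $A$ fails if $r$ lies to the left.
\end{enumerate}
The contradiction comes from reachability, not from a strict inequality for a face-count potential.
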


Together, \Cref{lem:meszaros_lemma} and \Cref{lem:uniqueness} show that 
for each arborescence $A' \in \mathcal{T}(O, r)$, there is a \emph{unique}
arborescence in $\mathcal{T}(o, r)$ whose edge counts in each Seifert circle
match those of $A'$. We define
\[
    m^*(A') := \text{the unique } B \in \mathcal{T}(o, r) \text{ such that } 
    c_i(B) = c_i(A') \text{ for all } i \in [k],
\]
where the existence of $B$ follows from applying Lemma~\ref{lem:meszaros_lemma} to $(P,o)$ 
with input tree $A'$ (viewed as an unoriented spanning tree), and uniqueness follows from \Cref{lem:uniqueness}.

\subsection{Construction of \texorpdfstring{$m$}{m}}

For the case of $(P,O)$, the proof and construction for \Cref{lem:uniqueness}
work identically. We now let $C_1, \dotsc, C_k$ be the Seifert circles of $P$, and
for a subset $S$ of edges of $P$ we let $c_i(T)$ be the number of edges of $S$
in $C_i$.

\begin{lemma}\label{lemma:seifert_version}
    Let $P$ be the projection of a special alternating link. Let $T$ be any
    (non-oriented) spanning tree in $P$, and fix any $r \in V(D)$. Then, there
    exists an arborescence $A \in \mathcal{T}(O,r)$, such that $c_i(A)=c_i(T)$
    for all $i \in [k]$.
\end{lemma}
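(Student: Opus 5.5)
The plan is to rerun the argument behind \Cref{lem:meszaros_lemma} for $(P,O)$, checking that it only uses properties that $(P,O)$ shares with $(P,o)$ when $P$ is special alternating. The graph $(P,O)$ is not an alternating dimap, since around a crossing the link orientation reads in, in, out, out. But the proof in \cite{KALMAN2013823,hafner_log-concavity_2024} should really use only the following three facts, where the circles $C_1,\dots,C_k$ are the clockwise faces of the dimap.
\begin{enumerate}
\item Every edge lies in exactly one circle $C_i$, and each $C_i$ is a directed cycle.
\item Every vertex lies on exactly two circles, and on each of them it has exactly one incoming and one outgoing edge.
\item Each $C_i$ bounds a face of the plane graph.
\end{enumerate}
I would first verify these for $(P,O)$ with $C_i$ the Seifert circles. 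Facts (1) and (2) hold because Seifert circles are directed cycles in $\mathcal{C}(P,O)$ that partition the edges: smoothing a crossing sends each incoming strand to an outgoing one. Fact (3) is exactly the special alternating hypothesis $\mathscr{F}(P)=\mathscr{F}_s(P)$. Equivalently, $O$ is obtained from $o$ by reversing the orientation of some Seifert circles. In the positive case these are, say, those in $K$, because $H$ and $K$ collect the counterclockwise and clockwise circles respectively.

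Next I would reformulate arborescences combinatorially. By (2), an arborescence $A\in\mathcal{T}(O,r)$ amounts to choosing, for each vertex $v\neq r$, one of the two circles through $v$, and then taking the edge of that circle entering $v$; the result must contain no directed cycle. Then $c_i(A)$ is the number of non-root vertices assigned to $C_i$. So I must show two things. First, the vector $(c_i(T))_i$ of any spanning tree $T$ is realized by some assignment of vertices to circles. Second, among the assignments realizing it, at least one is acyclic. The first is a Hall-type condition on the bipartite incidence graph between crossings and circles. It holds because $(c_i(T))$ is a hypertree vector: for any set $I$ of circles, the edges of $T$ lying in $\bigcup_{i\in I}C_i$ form a forest, so their number is at most the number of vertices touched by $\bigcup_{i\in I} C_i$ minus one. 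This is purely about the undirected graph, so it transfers verbatim from the dimap setting.

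For acyclicity I would follow the exchange argument of \cite{hafner_log-concavity_2024}. Start from any realizing assignment. If the chosen edges contain a directed cycle $Z$, then $Z$ cannot lie inside a single circle, because $c_i\le |C_i|-1$. Hence $Z$ passes through vertices where it switches circles. At such a vertex $v$, say $Z$ enters along $C_j$ and leaves along $C_i$, reassign $v$ to its other circle $C_i$. To keep the counts fixed, compensate by reassigning a vertex of $C_i$ from $C_i$ to $C_j$ along a chain, using the Hall condition. One then shows that a suitable potential decreases strictly: for instance, the region enclosed by $Z$, or the number of faces inside directed cycles, which is well defined by planarity and fact (3).

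The main obstacle is this last step. In the dimap case, planarity together with the alternating rotation system guarantees that a directed cycle switching circles turns consistently, so the enclosed region shrinks under the exchange. For $(P,O)$ the rotation at each vertex differs, and reversing the circles of $K$ changes which side of $Z$ the faces lie on. My first attempt would be to exploit that the $H$-circles and $K$-circles are empty faces of opposite orientations, and check in the two local pictures (switching from an $H$-circle to a $K$-circle and vice versa) that the cycle still has a well-defined interior face to eliminate. If that fails, I would instead transport arborescences directly between $(P,o)$ and $(P,O)$ by reversing one empty Seifert face at a time, proving the statement face by face.
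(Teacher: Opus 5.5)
\textbf{There is a genuine gap: the acyclicity step is not proved.} Your reformulation is sound. An element of $\mathcal{T}(O,r)$ is the same as choosing, for each $v\neq r$, one of the two Seifert circles through $v$ (and taking that circle's edge into $v$) so that no directed cycle appears. A choice realizing $(c_i(T))_i$ certainly exists, and you do not even need Hall for it: root $T$ at $r$ and send each $v\neq r$ to the circle containing the edge from $v$ to its parent.

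The acyclicity step, however, is the entire content of the lemma. Switching the in-edge of a vertex of $Z$ and then compensating along a chain of reassignments can close new directed cycles elsewhere. You never define a potential, let alone show that it decreases; you say as much yourself, and the fallback of reversing one Seifert face at a time is not specified. The obstacle you identify is also misdiagnosed, because existence needs no planarity or rotation data at all. Planarity matters only for uniqueness (\Cref{lem:uniqueness}), which is precisely the part that does not transfer to $(P,O)$.

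The missing idea, which is the route the paper takes, is to do the exchange on spanning trees rather than on in-edge selections. Call $r$ good if no edge of $T$ enters it. Call $v\neq r$ good if every vertex on its tree path from $r$ is good and the last edge of that path is the only edge of $T$ entering $v$.
\begin{itemize}
\item If $T$ is not an arborescence, choose a non-good $v_1$ all of whose proper ancestors are good. If $v_1\neq r$, the edge from its good parent must point into $v_1$. So in either case some other tree edge $e_1=(u\to v_1)$ enters $v_1$ from a child $u$.
\item Let $V_1$ be the vertex set of the component of $T-e_1$ containing $u$. The Seifert circle $C$ through $e_1$ is a directed cycle of $(P,O)$ that leaves $V_1$ along $e_1$, so it must re-enter $V_1$ along some edge $e_2\in C$.
\item Since $e_1$ is the only edge of $T$ between $V_1$ and its complement, $e_2\notin T$. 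Hence $T-e_1+e_2$ is a spanning tree, and it has the same counts $c_i$ because $e_1$ and $e_2$ lie on the same circle.
\item No vertex of $V_1$ is good, so good vertices stay good. Meanwhile $v_1$ loses an incoming tree edge and gains none. Repeating the exchange at $v_1$ eventually makes it good, and the procedure ends in an arborescence.
\end{itemize}
This uses only your fact (1). The paper invokes the special alternating hypothesis at this step, but what actually preserves the counts is that $e_2$ is chosen on the circle of $e_1$, and the directed-cycle property is what makes that choice possible.
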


\begin{proof}
We adapt the construction of \cite{hafner_log-concavity_2024} to $(P, O)$. Given a spanning tree $T$ rooted 
at $r$, we iteratively modify it into an arborescence on $(P, O)$ while preserving the 
number of edges in each Seifert circle.

Define the set of \textit{good vertices} $V'(T) \subseteq V(P)$ as follows. We
place $r \in V'(T)$ if and only if $r$ is not the terminal vertex of any edge of
$T$. For any other vertex $v$, we place $v \in V'(T)$ if and only if every
vertex along the unique (undirected) path from $r$ to $v$ in $T$ is already in
$V'(T)$, and $v$ is the terminal vertex of exactly one edge of $T$, namely the
edge on this path incident to $v$.

If $T$ is not yet an arborescence on $(P, O)$, there exists a vertex $v_1 \notin
V'(T)$ such that the unique path from $r$ to $v_1$ in $T$ passes only through
good vertices. Such a vertex is necessarily the terminal vertex of some edge
$e_1$ not lying on the path from $r$ to $v_1$. Let $V_1$ denote the set of
vertices whose unique path to $r$ in $T$ passes through $e_1$. Since $T$ is a
spanning tree, there exists an edge $e_2$ of $(P, O)$ with terminal vertex in
$V_1$ and initial vertex not in $V_1$. Replacing $e_1$ with $e_2$ yields a new
spanning tree $T_1$.

This replacement preserves the edge count in every Seifert circle: since each
Seifert circle of $P$ lies entirely within $H$ or within $K$ by the special
alternating hypothesis, and since both $e_1$ and $e_2$ have their terminal
vertex in $V_1$, they lie in the same Seifert circles, giving $c_i(T_1) =
c_i(T)$ for all $i \in [k]$.

As the number of good vertices strictly increases at each step, the process
terminates at an arborescence $A \in \mathcal{T}(O, r)$ with $c_i(A) = c_i(T)$
for all $i \in [k]$. 
\end{proof}

Unlike the alternating dimap setting, \Cref{lem:uniqueness} does not apply to
$(P, O)$, since the Seifert circles of $P$ are not all clockwise-oriented under
$O$. Consequently, the construction of \Cref{lemma:seifert_version} may yield
different arborescences depending on the order in which vertices $v_1$ (as in
the proof of \Cref{lemma:seifert_version}) are chosen. To obtain a well-defined
map $m : \mathcal{T}(o,r) \to \mathcal{T}(O,r)$, we fix once and for all an
enumeration of the vertices of $P$, and at each step of the construction, we
choose the eligible vertex $v_1$ of lowest index. With this convention, define
\begin{align*} m(A) := \ & \text{the arborescence in  $\mathcal{T}(O, r)$
produced by applying the construction of} \\ &
\text{\Cref{lemma:seifert_version} to  $A \in \mathcal{T}(o,r)$, with the
lowest-index tie-breaking rule throughout.} \end{align*} This gives a
well-defined map $m \colon \mathcal{T}(o, r) \to \mathcal{T}(O, r)$ satisfying
$c_i(m(A)) = c_i(A)$ for all $i \in [k]$.

\section{The polynomials \texorpdfstring{$\crow (t)$}{Δ\_(P,o)(t)} and \texorpdfstring{$\moy (t)$}{Δ\_(P,O)(t)}}
\label{sec:polyn}

We now prove the properties of the maps $m, m^*$ we will need for the proof of
\Cref{thm:main_thm}.

\begin{lemma}\label{lemma:map_preserves_weights}
    For any $A \in \mathcal{T}(o,r)$, $A' \in \mathcal{T}(O,r)$, 
    $\kappa(A) = \mu(m(A))$ and $\mu(A') = \kappa(m^*(A'))$.
\end{lemma}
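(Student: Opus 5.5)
The plan is to show that for any subset $S$ of edges of $P$, the weight $\kappa(S)$ (equivalently $\mu(S)$) depends only on the counts $c_1(S),\dots,c_k(S)$. Once this holds, the lemma follows at once, because both $m$ and $m^*$ preserve these counts.

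First I will combine the Corollary of \Cref{sec:weight} with the structure of $H$ and $K$. By that Corollary, $\mu=\kappa$ on every edge of $P$. So for any subgraph $S$ we have $\mu(S)=\kappa(S)=t^{|E(S)\cap E(K)|}$. Next I will use the special alternating hypothesis, $\mathscr{F}(P)=\mathscr{F}_s(P)=\mathcal{C}(H,o)\cup\mathcal{C}(K,o)$. This says each Seifert circle $C_i$ has all its edges in $H$ or all its edges in $K$. Since each edge of $P$ lies in exactly one Seifert circle (Crowell), the edge set of $K$ is the disjoint union of the edge sets of those $C_i$ lying in $K$. Writing $I_K=\{i : C_i\subseteq K\}$, this gives
\[
\kappa(S)=\mu(S)=t^{\sum_{i\in I_K} c_i(S)}.
\]

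Finally I will apply this formula to the maps. By construction, $m(A)$ satisfies $c_i(m(A))=c_i(A)$ for all $i$; the iterative swaps in \Cref{lemma:seifert_version} preserve each $c_i$. Likewise $m^*(A')$ is defined as the unique arborescence with $c_i(m^*(A'))=c_i(A')$. Substituting into the displayed formula gives $\kappa(A)=\mu(m(A))$ and $\mu(A')=\kappa(m^*(A'))$.

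The main obstacle is bookkeeping rather than substance: the counts $c_i$ for $m^*$ were defined relative to the clockwise regions of the dimap $(P,o)$, while the counts for $m$ were defined relative to the Seifert circles. I will note that the paper identifies these, since the clockwise regions of $(P,o)$ are precisely the Seifert circles. I will also note that the decomposition of $E(K)$ into whole Seifert circles is exactly where the special alternating hypothesis is used.
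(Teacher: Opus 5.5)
Your proposal is correct and follows the paper's argument. Like the paper, you reduce $\kappa$ to the number of edges in $K$, use the special alternating hypothesis to write $E(K)$ as a union of whole Seifert circles so that this number is determined by the $c_i$, and then combine count preservation under $m$ and $m^*$ with $\mu=\kappa$. Your version is slightly more careful in two respects: you cite the Corollary of \Cref{sec:weight}, whereas the paper cites \Cref{lem:weightings_agree}, which is stated only for positive projections. You also make the formula $\kappa(S)=t^{\sum_{i\in I_K}c_i(S)}$ explicit, so it covers both maps at once.
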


\begin{proof}
By \Cref{lemma:seifert_version} the number of edges of $A$ and $m(A)$ lying on
$K$ are the same, as the number of edges around each Seifert circle does not
change and Seifert circles lie entirely in $H$ or in $K$ by the special
alternating hypothesis. As the weighting of a tree with respect to $\kappa$ is
determined by the number of its edges lying in $K = \kappa^{-1}(t)$ we have
\[
\kappa(A) = \kappa(m(A)).
\]
By \Cref{lem:weightings_agree}
\[
 \kappa(m(A)) = \mu(m(A)).
\]
The proof for $A'$ and $m^*$ is identical.
\end{proof}

Recall from \eqref{eq:moy_poly} and \eqref{eq:crowell_poly} that 
\[
    \Delta_{(P,O)}(t) = \sum_{i=m_1}^{d_1} a_i t^i
    \qquad \text{and} \qquad
    \Delta_{(P,o)}(t) = \sum_{i=m_2}^{d_2} b_i t^i.
\]
\Cref{lemma:map_preserves_weights} implies part 1 of Theorem \ref{thm:main_thm}:
\begin{corollary}\label{cor:equal_degree}
    $m_1 = m_2$ and $d_1 = d_2$.
\end{corollary}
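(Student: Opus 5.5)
We will prove that the two polynomials share their lowest and highest exponents. The idea is to compare, via the maps $m$ and $m^*$, the sets of weights achieved by arborescences in each orientation. The exponents $m_1, d_1$ are the minimum and maximum of $\deg \mu(A')$ over $A' \in \mathcal{T}(O,r)$. Likewise, $m_2, d_2$ are the minimum and maximum of $\deg \kappa(A)$ over $A \in \mathcal{T}(o,r)$.

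The one point needing care is that no cancellation occurs. Every arborescence contributes a monomial $t^j$ with coefficient $+1$. So the coefficient $a_j$ (respectively $b_j$) is exactly the number of arborescences in $\mathcal{T}(O,r)$ (respectively $\mathcal{T}(o,r)$) of weight $t^j$. In particular, $a_j \neq 0$ if and only if some arborescence of weight $t^j$ exists, and similarly for $b_j$. Hence
\[
\{ j : a_j \neq 0\} = \{\deg \mu(A') : A' \in \mathcal{T}(O,r)\},
\qquad
\{ j : b_j \neq 0\} = \{\deg \kappa(A) : A \in \mathcal{T}(o,r)\}.
\]
We also need both sets nonempty. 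The graph $(P,o)$ is an alternating dimap, and $(P,O)$ is Eulerian and connected, so arborescences rooted at $r$ exist in both cases. Alternatively, apply \Cref{lem:meszaros_lemma} and \Cref{lemma:seifert_version} to any spanning tree.

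Next we invoke \Cref{lemma:map_preserves_weights}. For each $A \in \mathcal{T}(o,r)$ we have $\kappa(A) = \mu(m(A))$, so every exponent in the support of $\crow$ is in the support of $\moy$. Symmetrically, for each $A' \in \mathcal{T}(O,r)$ we have $\mu(A') = \kappa(m^*(A'))$, so every exponent in the support of $\moy$ is in the support of $\crow$. The two supports $\{j : a_j \neq 0\}$ and $\{j : b_j \neq 0\}$ are therefore equal finite nonempty sets of integers. Taking minima and maxima gives $m_1 = m_2$ and $d_1 = d_2$.

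This argument has no real obstacle, since all the work sits in \Cref{lemma:map_preserves_weights} and in the existence statements behind $m$ and $m^*$. The only point to state carefully is the positivity of coefficients noted above, which makes the supports exactly the sets of achieved weights. As a byproduct, the two polynomials have identical supports, not just identical extreme exponents.
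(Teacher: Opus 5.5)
Your proof is correct and is essentially the paper's argument: both apply \Cref{lemma:map_preserves_weights} in both directions, via $m$ and $m^*$, to see that every weight realized by an arborescence in one orientation is realized in the other, and then compare the extreme weights. Your version is slightly more careful than the written one: you make explicit the no-cancellation point that identifies each support with the set of realized weights, and you use $m$ and $m^*$ in the correct directions (the paper's proof writes $m(A)$ for $A \in \mathcal{T}(O,r)$, where $m^*$ is meant).
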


\begin{proof}
    Lemma \ref{lemma:map_preserves_weights} guarantees that from any
    arborescence $A$ on $\mathcal{T}(O,r)$ of weight $t^k$ we can find an
    arborescence $m(A)$ on $\mathcal{T}(o,r)$ of same weight, and vice versa. As
    $t^{m_1}$ and $t^{m_2}$ are the smallest weights for arborescence in
    $\mathcal{T}(O,r)$ and $\mathcal{T}(o,r)$, respectively, they have to agree
    and thus $m_1 = m_2$. A symmetric argument for highest weight arborescences
    in $\mathcal{T}(O,r)$ and $\mathcal{T}(o,r)$ gives $d_1 = d_2$.
    
\end{proof}

\begin{lemma}\label{lem:m_inj}
    The map $m$ is injective.
\end{lemma}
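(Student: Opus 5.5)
The plan is to obtain injectivity of $m$ from a left inverse rather than by analysing the exchange procedure of \Cref{lemma:seifert_version} directly. I will show that $m^* \circ m = \mathrm{id}_{\mathcal{T}(o,r)}$, which is the property promised at the start of \Cref{sec:map}. Once this holds, $m(A)=m(B)$ gives $A = m^*(m(A)) = m^*(m(B)) = B$, so $m$ is injective.

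\textbf{Step 1: $m$ preserves Seifert-circle counts.} Take $A \in \mathcal{T}(o,r)$. By construction, $m(A)$ is obtained from $A$, viewed as an unoriented spanning tree, by the exchange procedure of \Cref{lemma:seifert_version}. That procedure preserves the counts, so
\[
c_i(m(A)) = c_i(A) \quad \text{for all } i \in [k].
\]
The one point to check is that the Seifert circles $C_1,\dots,C_k$ used for $(P,O)$ in \Cref{lemma:seifert_version} are the same cycles $C_1,\dots,C_k$ used for the alternating dimap $(P,o)$ in \Cref{lem:meszaros_lemma} and \Cref{lem:uniqueness}. This is immediate: in both cases they are the elements of $\mathscr{F}(P)=\mathcal{C}(P,O)\cap\mathcal{C}(P,o)$, and the paper notes that the clockwise regions of $(P,o)$ are exactly the Seifert circles. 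The only difference between the two settings is which orientation is put on the same edge sets, and $c_i$ depends only on edge sets.

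\textbf{Step 2: identify $m^*(m(A))$ with $A$.} By definition, $m^*(m(A))$ is the unique $B\in\mathcal{T}(o,r)$ with $c_i(B)=c_i(m(A))$ for all $i$. Step 1 shows that $A$ itself lies in $\mathcal{T}(o,r)$ and satisfies these equalities. \Cref{lem:uniqueness}, applied to the alternating dimap $(P,o)$ with root $r$ and the sequence $x_i = c_i(A)$, says there is at most one such arborescence. Hence $B=A$, and therefore $m^*\circ m=\mathrm{id}$.

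\textbf{Main obstacle.} The step needing most care is Step 1's claim that the exchange in \Cref{lemma:seifert_version} genuinely preserves every $c_i$. Its proof only argues that $e_1$ and $e_2$ lie in the same Seifert circle, which, taken literally, needs justification. If that justification turns out to be insufficient, I would first try to strengthen it using the special alternating hypothesis. Two facts are available: each Seifert circle lies entirely in $H$ or in $K$, and it bounds an empty region. I would try to show that an edge of $(P,O)$ entering $V_1$ from outside can be chosen on the same circle as $e_1$, by walking along the circle containing $e_1$ backwards from its terminal vertex until the walk first leaves $V_1$.

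As a fallback, injectivity only requires that $m^*(m(A))=A$, and by \Cref{lem:uniqueness} it suffices that $c_i(m(A)) = c_i(A)$. So any correct version of \Cref{lemma:seifert_version} suffices, provided its construction preserves all circle counts and is made deterministic as in the definition of $m$.
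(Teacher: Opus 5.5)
Your argument is correct and is essentially the paper's. The paper shows directly that distinct $A,A'\in\mathcal{T}(o,r)$ differ in some count $c_i$ by \Cref{lem:uniqueness}, and that $m$ preserves all $c_i$; your $m^*\circ m=\mathrm{id}$ argument is the same reasoning packaged as a left inverse. Your worry about \Cref{lemma:seifert_version} is legitimate, and your repair is the right one: take $e_2$ on the Seifert circle through $e_1$, which is a directed cycle in $O$ and so must re-enter $V_1$. Note, though, that $V_1$ contains the initial rather than the terminal vertex of $e_1$.
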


\begin{proof}
    Let $C_1,C_2, \dots , C_n$ be the Seifert circles of $P$, $A$ and $A'$ be
    distinct arborescences in $\mathcal{T}(P,o)$, and $a_i(A), a_i(A')$ be
    the number of edges of $A$ and $A'$ in $C_i$, respectively. Then by
    \Cref{lem:uniqueness} there exists at least one $i \in [n]$ such
    that $c_i(A) \neq c_i(A').$ Finally, since $m$ preserves the number of edges
    in each Seifert circle, $m(A)$ and $m(A')$ must have different number of
    edges in $C_i$, thus $m(A) \ne m(A')$.
\end{proof}

We now show part 2 of Theorem \ref{thm:main_thm}:
\begin{corollary}\label{cor:dominates}
    The coefficients of  $\moy (t)$ dominate the coefficients of $\crow (t)$
\end{corollary}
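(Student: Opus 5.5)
The plan is to compare the two polynomials coefficient by coefficient, using the injection $m$ to match arborescences of equal weight. For each exponent $i$ with $m_1 \le i \le d_1$ we set
\[
\mathcal{T}_i(o,r) := \{A \in \mathcal{T}(o,r) : \kappa(A) = t^i\}, \qquad \mathcal{T}_i(O,r) := \{A' \in \mathcal{T}(O,r) : \mu(A') = t^i\}.
\]
By the definitions of $\crow(t)$ and $\moy(t)$ as sums of monomial weights over arborescences, $b_i = |\mathcal{T}_i(o,r)|$ and $a_i = |\mathcal{T}_i(O,r)|$. Here we use that every weight $\kappa(A)$ and $\mu(A')$ is a single monomial $t^j$ with coefficient $1$, so no cancellation occurs.

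First, we show that $m$ maps $\mathcal{T}_i(o,r)$ into $\mathcal{T}_i(O,r)$. This follows from \Cref{lemma:map_preserves_weights}: for $A \in \mathcal{T}_i(o,r)$ we have $\mu(m(A)) = \kappa(A) = t^i$. Second, \Cref{lem:m_inj} says that $m$ is injective on all of $\mathcal{T}(o,r)$, so its restriction to each $\mathcal{T}_i(o,r)$ is also injective. This gives $b_i \le a_i$ for every $i$.

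To cover the full index range stated in part 2 of \Cref{thm:main_thm}, we invoke \Cref{cor:equal_degree}. It gives $m_1 = m_2$ and $d_1 = d_2$, so both sums run over the same range. Any index in that range where one coefficient vanishes is handled by the same inequality, with $b_i = 0$ allowed.

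No step is a real obstacle. The only point needing care is that each coefficient is exactly a count of arborescences. This relies on the weights lying in $\{1,t\}$ and being multiplied, so each arborescence contributes $+1$ to exactly one coefficient.
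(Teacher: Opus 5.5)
Your proposal is correct and is essentially the paper's proof. You identify $a_i$ and $b_i$ as counts of arborescences of weight $t^i$, then combine weight preservation (\Cref{lemma:map_preserves_weights}), injectivity of $m$ (\Cref{lem:m_inj}), and the matching index range (\Cref{cor:equal_degree}), exactly as the paper does.
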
 \label{dominate}

\begin{proof}
    For any $m_1 \le i \le d_1$, $a_i$ is by definition the number of distinct
    arborescences in $\mathcal{T}(O,r)$ with weight $t^i$. Similarly, for $m_2
    \le j \le d_2$, $b_j$ is the number of distinct arborescences in
    $\mathcal{T}(o,r)$ with weight $t^j$. Notice that (1) $m_1=m_2$, $d_1 = d_2$
    by \Cref{cor:equal_degree}, (2) $m : \mathcal{T}(o,r) \to \mathcal{T}(O,r) $
    is injective by \Cref{lem:m_inj}, and (3) $m$ preserves the weights $\mu =
    \kappa$ by \Cref{lemma:map_preserves_weights}. Conditions (1), (2), and (3)
    together imply that the number of arborescences on $\mathcal{T}(o,r)$ of
    weight $t^i$ cannot exceed the number of arborscences on $\mathcal{T}(O,r)$
    of weight $t^i$ for any $m_1 = m_2 \le i \le d_1 = d_2$. This shows $b_i \le
    a_i$ for every $i$.
\end{proof}
To prove part 3 of Theorem \ref{thm:main_thm}, we will have to show that $m$
induces a bijection between the set of arborescences of minimal weight in
$\mathcal{T}(o,r)$ and $\mathcal{T}(O,r)$. The next result characterizes those
arborescences:

\begin{lemma}\label{lemma:charac_minimal}
    The arborescences $A \in \mathcal{T}(o,r)$ of minimum weight are the ones missing exactly one edge per Seifert circle in $\mathcal{C}(H,o)$.
\end{lemma}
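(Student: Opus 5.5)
We plan to count edges of an arborescence Seifert circle by Seifert circle. Since $\kappa(A)=t^{|E(A)\cap K|}$, minimising the weight is the same as minimising the number of edges of $A$ in $K$. Every arborescence has $|V(P)|-1$ edges, so this is equivalent to maximising the number of edges in $H$. By the special alternating hypothesis, every Seifert circle lies entirely in $H$ or entirely in $K$. Moreover, the Seifert circles partition $E(P)$, since each edge lies in exactly one of them. Hence
\[
|E(A)\cap H| = \sum_{C_i \subseteq H} c_i(A).
\]
An arborescence is acyclic, so it contains no full Seifert circle. This gives the bound $c_i(A)\le |E(C_i)|-1$ for every $i$, and therefore
\[
|E(A)\cap H| \le |E(H)| - |\mathcal{C}(H,o)|.
\]
Here we use that the circles in $H$ are exactly $\mathcal{C}(H,o)$, by the characterisation $\mathscr{F}_s(P)=\mathcal{C}(H,o)\cup\mathcal{C}(K,o)$ together with $\mathscr{F}(P)=\mathscr{F}_s(P)$. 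Equality holds precisely when $A$ misses exactly one edge on each circle of $\mathcal{C}(H,o)$.

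It remains to show that this bound is attained, because then the minimum-weight arborescences are exactly the equality cases. We will produce a spanning tree $T$ missing exactly one edge per $H$-circle and then convert it into an arborescence with \Cref{lem:meszaros_lemma}. That lemma preserves every $c_i$, so the resulting arborescence in $\mathcal{T}(o,r)$ also misses exactly one edge per $H$-circle.

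To build $T$, we need the subgraph $H$ to be a forest after one edge is deleted from each of its circles. Since $H$ is the edge-disjoint union of its circles, it suffices to know that the cycles of $H$ are exactly its Seifert circles, meaning no cycle of $H$ uses edges from two different circles. We argue this via planarity. The circles in $H$ bound special regions, and any two of them meet only at crossing vertices. The graph whose vertices are the Seifert circles and whose edges are the crossings should then be a tree, namely the tree-like Seifert-circle structure of a special diagram. Consequently $H$ is a cactus-like graph in which every cycle is a single Seifert circle. Deleting one edge from each such circle leaves a forest $F\subseteq H$. We then extend $F$ to a spanning tree $T$ of $P$ using edges of $K$, which is possible because $P$ is connected. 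These added edges do not affect the counts on $H$-circles.

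The main obstacle is the claim in the previous paragraph that $H$ has no cycles other than the Seifert circles in $\mathcal{C}(H,o)$. If the direct planarity argument is awkward, we would fall back on an Euler characteristic count. One computes $|E(H)|-|V(H)|+\#\text{components}(H)$ and checks that it equals $|\mathcal{C}(H,o)|$. This uses that each crossing vertex lies on exactly two Seifert circles, one through its $H$-edges and one through its $K$-edges, so each vertex has degree $2$ in $H$. Then $H$ is a disjoint union of cycles, and these cycles are exactly the circles in $\mathcal{C}(H,o)$, which proves the claim immediately.
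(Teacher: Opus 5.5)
Your proposal is correct and follows essentially the same route as the paper. You bound the number of $H$-edges via acyclicity, then attain the bound by deleting one edge from each circle of $\mathcal{C}(H,o)$, extending by $K$-edges to a spanning tree, and converting it into an arborescence with the same counts $c_i$; your use of \Cref{lem:meszaros_lemma} is the right tool for $(P,o)$, whereas the paper cites \Cref{lemma:seifert_version} at this step.

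However, drop the planarity/cactus argument, because its key claim is false: the standard trefoil diagram is special but has two Seifert circles joined by three crossings, so its Seifert graph is not a tree, and in fact distinct $H$-circles never share a vertex. Make your fallback the actual proof instead. At each vertex the Seifert smoothing pairs the two $o$-incoming edges (weights $1$ and $t$) with different outgoing edges, so by the special hypothesis one local arc lies in $H$ and the other in $K$. Hence every vertex has degree $2$ in $H$, and $H$ is a vertex-disjoint union of its Seifert circles. This justifies the forest claim, which the paper only asserts.
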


\begin{proof}
    Let $A \in \mathcal{T}(o,r)$ be arbitrary. As every tree on a graph contains
    the same number of edges, the weight $\kappa(A)$ is completely determined by
    the number of edges of $A$ in $H$. Being a tree, $A$ cannot contain a cycle,
    so $A$ is missing at least one edge per Seifert circle in
    $\mathcal{C}(H,o)$. Furthermore, if $A$ misses exactly one edge per Seifert
    circle in $H$, it is necessarily an arborescence of minimum weight. 
    
    We now show we can construct an arborescence missing one edge per Seifert
    circle in $\mathcal{C}(H,o)$. Start with the set of edges $E(H)$, and remove
    one edge of each circle in $\mathcal{C}(H,o)$, so the result is a forest on
    $P$. Extend this forest to a spanning tree $A$ (only adding edges on $K$),
    and use Lemma~\ref{lemma:seifert_version} to construct an arborescence $A'
    \in \mathcal{T}(o,r)$ which still has the same number of edges per Seifert
    circle as $A$. $A'$ is thus an arborescence missing one edge per Seifert
    circle in $\mathcal{C}(H,o)$, as desired.
\end{proof}

We denote the set of arborescences of $(P,O)$ of minimal weight by
$\mathcal{T}_m(O,r)$ and the set of arborescences of $(P,o)$ of minimal weight
by $\mathcal{T}_m(o,r)$. 

\begin{lemma}
    The map $m^*$ induces a bijection between the arborescences of minimal
    weight of $\mathcal{T}_m(O,r)$ and the arborescences of minimal weight of
    $\mathcal{T}_m(o,r)$.
\end{lemma}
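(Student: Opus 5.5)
We prove that $m^*$ restricts to a map $\mathcal{T}_m(O,r)\to\mathcal{T}_m(o,r)$, and then show this restriction is injective and surjective. In each step we compare Seifert-circle edge counts $c_i$, rather than weights alone.

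\textbf{Well-definedness.} By \Cref{cor:equal_degree}, the minimal weight in $\mathcal{T}(O,r)$ equals the minimal weight in $\mathcal{T}(o,r)$; both are $t^{m_1}$. By \Cref{lemma:map_preserves_weights}, $\kappa(m^*(A'))=\mu(A')$ for every $A'\in\mathcal{T}(O,r)$. Hence $m^*$ sends $\mathcal{T}_m(O,r)$ into $\mathcal{T}_m(o,r)$. Symmetrically, $m$ sends $\mathcal{T}_m(o,r)$ into $\mathcal{T}_m(O,r)$.

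\textbf{Surjectivity.} We have $m^*\circ m=\mathrm{id}$. Indeed, $c_i(m(A))=c_i(A)$, and by \Cref{lem:uniqueness} $A$ is the unique element of $\mathcal{T}(o,r)$ with these counts, so $m^*(m(A))=A$. Therefore every $A\in\mathcal{T}_m(o,r)$ equals $m^*(m(A))$ with $m(A)\in\mathcal{T}_m(O,r)$.

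\textbf{Injectivity.} This is the main obstacle, because \Cref{lem:uniqueness} is unavailable for $(P,O)$. We would first establish the $(P,O)$-analogue of \Cref{lemma:charac_minimal}. Each Seifert circle in $H$ is an oriented cycle in $O$ as well (Seifert circles lie in $\mathcal{C}(P,O)$). A tree must therefore miss at least one edge per circle in $H$, and the weight depends only on $|A'\cap H|$. Since $\mathcal{T}_m(O,r)$ contains $m$ of an element of $\mathcal{T}_m(o,r)$, the minimum is attained by missing exactly one edge per $H$-circle. So every $A'\in\mathcal{T}_m(O,r)$ has $c_i(A')=|C_i|-1$ for each $H$-circle $C_i$. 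The sum of $c_i(A')$ over all $i$ is $|V|-1$, so the $K$-part also has a fixed total.

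The remaining task is to show that the $K$-part of $A'$ is determined, up to the counts, by the $o$-side data. The plan is as follows.
\begin{enumerate}
\item Contract each $H$-circle (each is a path in $A'$ with one edge removed). The contracted graph is a plane tree-like structure.
\item In $O$, the $H$-circles are counterclockwise and the $K$-circles are clockwise. At each crossing joining an $H$-circle to a $K$-circle, the $O$ orientation and the $o$ orientation of the $K$-edges either agree or are all reversed, uniformly along each $K$-circle.
\item Hence, once the missing edge in each $H$-circle is fixed, an arborescence on $(P,O)$ must enter each $K$-circle at a forced vertex and miss a forced edge. This is analogous to the rigidity argument behind \Cref{lem:uniqueness}: the missing edge of each circle is the one entering the vertex through which the path from $r$ arrives.
\end{enumerate}
Carrying out this uniqueness argument, via induction on circles ordered by their distance from $r$ in the tree of Seifert circles, shows that at most one $A'\in\mathcal{T}_m(O,r)$ has given counts $(c_i)$. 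Hence $m^*$ is injective on $\mathcal{T}_m(O,r)$.

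If the direct rigidity argument proves messy, a fallback is a counting argument. Compute $a_{m_1}$ as a product over circles of forced choices, via a matrix-tree or transfer argument, and compare it with $b_{m_2}$. Together with surjectivity, equality of the two cardinalities then yields the bijection.
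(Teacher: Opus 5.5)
Your well-definedness and surjectivity steps are correct. Both maps preserve weight, so they carry the minimal-weight sets into each other. The identity $m^*\circ m=\mathrm{id}$ (via \Cref{lem:uniqueness}) then makes $m^*$ surjective onto $\mathcal{T}_m(o,r)$; this is the inequality $|\mathcal{T}_m(O,r)|\ge|\mathcal{T}_m(o,r)|$ that the paper gets from injectivity of $m$. Your $(P,O)$-analogue of \Cref{lemma:charac_minimal} is also fine.

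The gap is injectivity, which you do not prove. The rigidity argument is only announced, and step 3 targets the wrong statement. It fixes the missing edge of each $H$-circle, but the counts $(c_i)$ never determine that edge: every $A'\in\mathcal{T}_m(O,r)$ has $c_i(A')=|C_i|-1$ on each $H$-circle, whichever edge is missing. So even a completed step 3 would not show that two minimal $O$-arborescences with equal counts coincide. Step 1 also discards exactly the relevant information, namely at which vertex the path from $r$ enters an $H$-circle. Moreover, a $K$-circle can meet the same $H$-circle at several crossings, so the contracted graph is not an alternating dimap to which \Cref{lem:uniqueness} applies. The matrix-tree fallback names no actual computation of $a_{m_1}$ or $b_{m_2}$.

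The missing idea is an explicit local flip that turns a minimal $O$-arborescence into an $o$-arborescence with the same counts. The key fact is that $O$ and $o$ agree on every $K$-edge and are opposite on every $H$-edge: the $H$-circles are counterclockwise in $O$, while all Seifert circles are clockwise in $o$. Your step 2 leaves this undetermined.

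The flip works as follows. For $A'\in\mathcal{T}_m(O,r)$, each $H$-circle $C_i$ meets $A'$ in an $O$-directed path starting at the unique vertex $v_i$ with no incoming $C_i$-edge in $A'$. Since every non-root vertex has in-degree one, any path from $r$ that uses edges of $C_i$ enters them at $v_i$. Replace the outgoing $C_i$-edge of $v_i$ by its incoming one. On $C_i$ this gives an $o$-directed path that again starts at $v_i$. Each path from $r$ can then be rerouted through the complementary arc of $C_i$, so the result is an arborescence in $\mathcal{T}(o,r)$ with the same counts, hence equal to $m^*(A')$ by \Cref{lem:uniqueness}. The flip is invertible, because $v_i$ is recovered from the image as the starting vertex of its $o$-path in $C_i$.

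Hence $m^*$ is injective on $\mathcal{T}_m(O,r)$, which together with your surjectivity step gives the bijection. This flip is the paper's construction; the paper combines it with the reverse cardinality bound from \Cref{cor:dominates}.
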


\begin{proof}

    By \Cref{lemma:charac_minimal}, the arborescences in $\mathcal{T}_m(O,r)$
    are the ones that are missing exactly one edge of every cycle in $H$. For
    an arbitrary arborescence $A \in \mathcal{T}_m(O,r)$, we consider for each
    circle $C_i \in H$ the unique vertex $v_i$ with one outgoing edge but no
    incoming edges from $C_i.$ Then, for each $v_i,$ we remove the outgoing edge
    of $v_i$ from $A$ and replace it with the incoming edge to get $A'$ (the
    procedure is exemplified in \Cref{fig:placeholder}). We show that the edge set
    of $A'$ is an arborsecence in $\mathcal{T}_m(o,r)$. 
    
    Recall that as $A$ is an arborescence in $(P,O)$, for every vertex $v \in P$
    there is a unique path from $r$ to $v$ in $A$, and the direction of the path
    agrees with the orientation $O$. Let $e_1, e_2, \dotsc, e_n$ be the sequence
    of edges in the path from $r$ to $v$ (edges colored in red on left drawing in \Cref{fig:placeholder}).

    We claim that if $e_k$ is the first edge in the path contained in the
    Seifert circle $C_i \in H$, then it must be that $e_k$ is the outgoing edge
    of $v_i$. Indeed, if not, we could use the edges $e_1, \dotsc, e_{k-1}$ and
    the incoming edge of the starting vertex of $e_k$ to build a non-directed
    path from $r$ to a vertex, contradicting that $A$ is an arborescence. Also
    notice that the path can only intersect $H$ on a single consecutive sequence
    of edges of the path, as otherwise we would find a cycle in $A$.

    Notice that $A'$ is still a spanning tree of $P$, as the number of
    edges and the connectivity within each Seifert circle remains the same as in
    $A$. To show that the unique path from $r$ to a vertex $v$ agrees with the
    orientation, we start with the path $e_1, e_2, \dotsc, e_n$ from $r$ to $v$
    in $A$ and modify it to get a path in $A'$, showing that the edges of the
    new path agree with the orientation $o$.

    The edges of $A$ in $K$ have the same orientation in $O$ and $o$, and were
    not modified when constructing $A'$, so we will not modify them in the path.
    Let $C_i \in H$ be a Seifert circle in $H$ intersecting the path. Substitute
    the edges of $C_i$ on the path by all the edges of $C_i$ \emph{not}
    intersecting the original path (see the modified path on the right in \Cref{fig:placeholder}). Our previous claim that the first edge of the path
    intersecting $C_i$ is the outgoing edge of $v_i$ guarantees that the
    substituting edges are all in $A'$. Performing the above path modification
    for each Seifert circle intersecting the original path results in a directed
    path (with respect to $o$) from $r$ to $v$ using edges of $A'$, showing that
    $A'$ is an arborescence in $\mathcal{T}_m(o,r)$ as we wanted.

    \begin{figure}[!ht]
    \centering
    \includegraphics[width=0.8\linewidth]{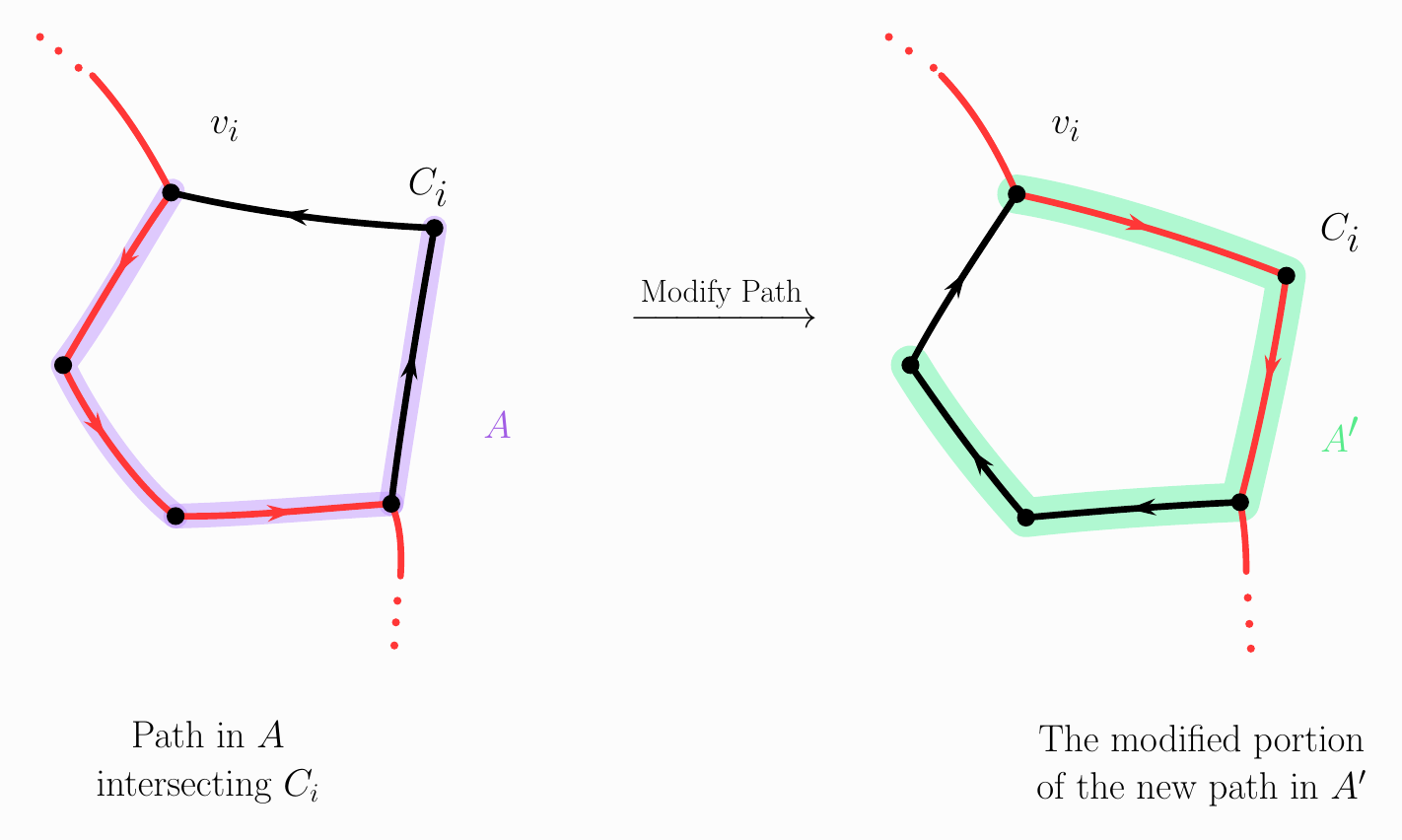}
    \caption{Path modification procedure on the Seifert circle $C_i \in H$. On
    the left, we color in red the original directed path from $r$ to $v$ (using
    the orientation $O$) and shade in purple the edges of $C_i$ lying in $A$. On
    the right, we still color in red the modified portion of the directed path
    from $r$ to $v$ (now using the orientation $o$), and shade in green the
    edges of $C_i$ lying in $A'$.}
    \label{fig:placeholder}
\end{figure}
    
    This procedure takes two distinct arborescence in $\mathcal{T}_m(O,r)$ to
    two distinct arborescences in $\mathcal{T}_m(o,r)$, so we conclude that
    $|\mathcal{T}_m(O,r)| \leq |\mathcal{T}_m(o,r)|$. Since we have shown the
    reverse inequality in \ref{dominate}, we know that $|\mathcal{T}_m(O,r)| =
    |\mathcal{T}_m(o,r)|.$ Since $m$ is injective, this shows that it is also a
    bijection. 
\end{proof}

\begin{corollary}\label{corollary:minmaxequal}
    The coefficients of the minimum and maximum degree terms of $\moy (t)$ and $\crow (t)$ are equal.  
\end{corollary}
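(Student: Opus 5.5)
The minimum-degree case follows directly from the preceding lemma, and the maximum-degree case I would obtain by running the same argument with the roles of $H$ and $K$ exchanged.

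\emph{Minimum degree.} By \Cref{cor:equal_degree}, $m_1=m_2$. The coefficient $a_{m_1}$ counts $\mathcal{T}_m(O,r)$ and $b_{m_2}$ counts $\mathcal{T}_m(o,r)$. The preceding lemma gives $|\mathcal{T}_m(O,r)|=|\mathcal{T}_m(o,r)|$, hence $a_{m_1}=b_{m_2}$.

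\emph{Maximum degree.} Here the weight $\kappa(A)=t^{\#(A\cap K)}$ is maximal exactly when $A$ has as many edges in $K$ as possible. First I prove the analogue of \Cref{lemma:charac_minimal}: the maximal-weight arborescences in $\mathcal{T}(o,r)$ are exactly those missing one edge from each Seifert circle in $\mathcal{C}(K,o)$. The proof is the same. A tree cannot contain a whole circle, which gives the upper bound on $\#(A\cap K)$. For existence, take $E(K)$ minus one edge per circle, extend it by $H$-edges to a spanning tree, and apply \Cref{lem:meszaros_lemma}. The same argument in $(P,O)$, now using \Cref{lemma:seifert_version}, gives the matching characterization of the maximal-weight arborescences in $\mathcal{T}(O,r)$.

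Next I repeat the path-modification argument of the preceding lemma with $K$ in place of $H$. Take $A\in\mathcal{T}(O,r)$ of maximal weight. For each circle $C_i\subset K$, let $v_i$ be its unique vertex with an outgoing but no incoming edge of $C_i$ in $A$, and swap that outgoing edge for the incoming one. The main point to check is that $O$ and $o$ agree on $H$-edges and are reversed on $K$-edges, which is the symmetric counterpart of the situation in the preceding lemma.

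To verify this, I would use the convention that $H$ carries the counterclockwise circles and $K$ the clockwise ones under $O$. Since $(P,o)$ has all Seifert circles clockwise, $O$ and $o$ agree on the clockwise circles (those in $K$) and differ on those in $H$. If that is how the orientations actually sit, the reversed role is played by $H$, and the maximum-degree case is then a direct argument that needs no swap. In that case $O$ and $o$ agree on $K$, and a maximal-weight arborescence has full $K$-circles minus one edge each. One could instead avoid the issue altogether by mirroring $P$ or replacing $t\mapsto t^{-1}$, which exchanges $H$ and $K$ and turns maximal weight into minimal weight. This symmetry step is the main obstacle: I must confirm that the earlier results are invariant under the exchange, or else redo the path argument of the preceding lemma for the other subgraph.

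Granting either version of the argument, I obtain an injection from the maximal-weight arborescences of $\mathcal{T}(O,r)$ into those of $\mathcal{T}(o,r)$. Combined with the injectivity of $m$ and \Cref{lemma:map_preserves_weights}, this gives $a_{d_1}=b_{d_2}$.

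\emph{Equality of the two ends.} To conclude $a_{m_1}=a_{d_1}$, I would use the symmetry of the Alexander polynomial, $\Delta(t)\doteq\Delta(t^{-1})$. Since $\crow(-t)$ is the Alexander polynomial up to a unit, $|b_{m_2}|=|b_{d_2}|$. Chaining the two equalities above then gives $a_{d_1}=a_{m_1}=b_{m_2}=b_{d_2}$.
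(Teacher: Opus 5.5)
Your minimum-degree step, and your use of the symmetry of the classical Alexander polynomial to get $b_{m_2}=b_{d_2}$, are fine. The gap is the top-degree equality $a_{d_1}=b_{d_2}$: you never establish it, and none of the routes you sketch works as stated.

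\emph{Swap inside $K$-circles.} As you work out yourself, in the setting of the preceding lemma $O$ and $o$ agree on $K$ and are reversed on $H$. So swapping edges inside the $K$-circles is the wrong move, since those paths are already directed in $o$.

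\emph{No swap.} The fallback claim that the maximal case then ``needs no swap'' is false. Let $k_K$ be the number of $K$-circles and $C_r$ the one through $r$. A maximal-weight $A\in\mathcal{T}(O,r)$ consists of every $K$-circle minus one edge, plus $k_K-1$ connecting $H$-edges, and every connecting edge is reversed in $o$. Concretely, take a $K$-circle $C\neq C_r$ from which no connecting edge of $A$ leaves. Its entry vertex $w\neq r$ has no $o$-incoming edge in $A$, so $A\notin\mathcal{T}(o,r)$ as soon as $k_K\ge 2$. The connecting edges are isolated edges between circles rather than paths inside circles, so there is no circle-by-circle repair as in the preceding lemma.

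\emph{The substitution $t\mapsto t^{-1}$.} This is equivalent to reflecting the plane, and it only relabels which class carries weight $t$. It does not change which edges $o$ reverses, so after it the extremal case you need is still the bad one.

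\emph{Switching every crossing.} This does exchange the roles: it fixes $O$ and the embedding, hence $\mu$ and $\moy(t)$, and it reverses $o$. But you would then have to compare the Crowell polynomial of the switched diagram with $\crow(t)$ via mirror invariance of the Alexander polynomial, and none of this is carried out.

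The paper needs no second bijection. It uses palindromicity of \emph{both} polynomials, so $a_{d_1}=a_{m_1}$ and $b_{d_2}=b_{m_2}$, and the top coefficients follow from the bottom ones. Your chain invokes symmetry only for $\crow(t)$, which is exactly why it stalls at $a_{d_1}$; the missing ingredient is the symmetry of $\moy(t)$.

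If you prefer not to rely on that symmetry (the paper simply asserts it), a direct argument is available.
\begin{itemize}
\item Contract each $K$-circle to a point, obtaining a digraph $G_K$ whose arcs are the $H$-edges oriented by $O$.
\item Maximal-weight arborescences in $\mathcal{T}(O,r)$ are in bijection with spanning out-arborescences of $G_K$ rooted at $C_r$.
\item Since $o$ reverses every $H$-edge, maximal-weight arborescences in $\mathcal{T}(o,r)$ are in bijection with spanning in-arborescences of $G_K$ at the same vertex.
\item Each vertex of $P$ has exactly one $O$-incoming and one $O$-outgoing $H$-edge, so $G_K$ is Eulerian.
\item For Eulerian digraphs the matrix-tree theorem gives equally many in- and out-arborescences at any vertex, which yields $a_{d_1}=b_{d_2}$.
\end{itemize}
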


\begin{proof}
    As is Corollary~\ref{cor:dominates}, we know that the coefficients of the
    minimum degrees come from the number of trees with minimum weight, so by
    above lemma the coefficients are equal. By the palindromicity of the
    coefficients of the two polynomials, the same applies to the coefficient of
    the maximum degree.

\end{proof}

We now prove the main result of our paper.

\maintheorem*
\begin{proof}
    The first part follows by Corollary \ref{cor:equal_degree}, the second
    part by Corollary \ref{cor:dominates}, and the third by Corollary
    \ref{corollary:minmaxequal}
\end{proof}

\section{Conclusion}

We have shown that for any alternating projection $P$ of a special alternating
link, the polynomials $\Delta_{(P,O)}(t)$ and $\Delta_{(P,o)}(t)$ satisfy three
properties: they share the same degree range; the coefficients of
$\Delta_{(P,O)}(t)$ dominate those of $\Delta_{(P,o)}(t)$; and their extremal
coefficients agree. The proof proceeds by constructing a weight-preserving
injective map $m \colon \mathcal{T}(o,r) \to \mathcal{T}(O,r)$, with the special
alternating hypothesis ensuring both that the weightings $\mu$ and $\kappa$
agree and that the arborescences of minimal and maximal weight are in
bijection.

The restriction to special alternating links is used in two essential ways:
first, to guarantee that $\mu$ and $\kappa$ agree on all edges of $P$
(\Cref{lem:weightings_agree}), and second, to ensure that each Seifert circle
lies entirely in $H$ or in $K$ (which underpins the weight computation in
\Cref{lemma:map_preserves_weights}). Removing either of these hypotheses appears
to require substantially new ideas.

Motivated computational evidence (around 300 examples from the knot database in
\cite{pyknotid}) on alternating links beyond the special alternating case, we
conjecture that the conclusion of \Cref{thm:main_thm} holds for all alternating
links.

\begin{conjecture}
Let $L$ be an alternating link with alternating projection $P$, and let
\[
    \Delta_{(P,O)}(t) = \sum_{i=m_1}^{d_1} a_i t^i
    \qquad \text{and} \qquad
    \Delta_{(P,o)}(t) = \sum_{i=m_2}^{d_2} b_i t^i
\]
where $\crow(t)$ is the Alexander polynomial of a link coming from Crowell's model, and by $\moy(t)$ is the associated MOY graph Alexander polynomial of the link. Then, 
\begin{enumerate}
    \item $m_1 = m_2$ and $d_1 = d_2$,
    \item $b_i \leq a_i$ for every $m_1 \leq i \leq d_1$,
    \item $a_{d_1} = a_{m_1} = b_{m_2} = b_{d_2}$.
\end{enumerate}
\end{conjecture}

A proof of this conjecture in full generality would likely require either extending the map 
$m$ to the non-special setting, where the weightings $\mu$ and $\kappa$ need no longer 
agree, or developing an alternative approach. We leave this as an open problem.

\section{Acknowledgements}

We would like to warmly thank our research advisor, Professor Karola
M\'esz\'aros. This work would not have been possible without her patient and
thoughtful guidance at every step of our research journey, from ideation to
proofreading the paper.  

\printbibliography

\end{document}